\documentclass[11pt]{article}

\usepackage[a4paper,margin=1.08in]{geometry}
\usepackage[T1]{fontenc}
\usepackage[utf8]{inputenc}
\usepackage{lmodern}
\usepackage{microtype}
\usepackage{amsmath,amssymb,amsthm,mathtools}
\usepackage{booktabs}
\usepackage{enumitem}
\usepackage[hidelinks]{hyperref}

\newtheorem{theorem}{Theorem}[section]
\newtheorem{proposition}[theorem]{Proposition}
\newtheorem{lemma}[theorem]{Lemma}
\newtheorem{corollary}[theorem]{Corollary}
\theoremstyle{definition}
\newtheorem{definition}[theorem]{Definition}
\theoremstyle{remark}
\newtheorem{remark}[theorem]{Remark}

\numberwithin{equation}{section}

\newcommand{\op}{\oplus}
\newcommand{\perpdef}{\mathrel{\perp}}
\newcommand{\id}{\mathrm{id}}
\newcommand{\NN}{\mathbb{N}}
\newcommand{\ZZ}{\mathbb{Z}}
\newcommand{\ord}{\operatorname{ord}}
\newcommand{\Cn}{C_n}

\newcommand{\doi}[1]{\href{https://doi.org/#1}{doi:#1}}

\title{The first fatal axiom for weakened sequential products on finite MV-effect algebras\\\large Local obstruction, exact low-rank classification, and the rank-one boundary case}
\author{Joaquim Reizi Higuchi\\
\small The Open University of Japan, Graduate School, Program in Natural Sciences, Japan\\
\small \texttt{1218237360@campus.ouj.ac.jp}\\
\small ORCID: \url{https://orcid.org/0009-0006-1392-3770}}
\date{April 2026}

\begin{document}
\maketitle

\begin{abstract}
We study total binary operations on effect algebras obtained by truncating the Gudder--Greechie axiom package for a sequential product. The point is not to reprove the known nonexistence of non-Boolean full sequential products on finite chains, but to determine, axiom by axiom, where finite MV-effect algebras first fail. We prove two structural facts valid on every effect algebra. First, the operation
\[
\sigma_E(a,b)=
\begin{cases}
0,& a=0,\\
b,& a\neq 0,
\end{cases}
\]
satisfies \textup{(S1)}--\textup{(S3)}, so \textup{(S3)} is never fatal by itself. Second, any operation satisfying \textup{(S1)}--\textup{(S4)} already has the right-unit property $a\circ 1=a$, even without \textup{(S5)}. From this we derive a local obstruction theorem: if an effect algebra contains an atom of finite isotropic index at least $2$, then it admits no \textup{(S1)}--\textup{(S4)} operation. Consequently, a finite MV-effect algebra admits such an operation if and only if it is Boolean. In this precise sense, \textup{(S4)} is the first fatal axiom on finite MV-effect algebras.

On the constructive side, let $E_{\mathbf u}=[0,\mathbf u]\subseteq \ZZ^r$ be the simplicial interval representation of a finite MV-effect algebra. We show that additive maps $E_{\mathbf u}\to E_{\mathbf v}$ are exactly the restrictions of positive group homomorphisms $\ZZ^r\to\ZZ^s$, equivalently maps $x\mapsto Mx$ given by nonnegative integer matrices with $M\mathbf u\le \mathbf v$. This yields a complete classification of \textup{(S1)}+\textup{(S2)} operations by row-wise subunital matrices. We then solve the first genuinely higher-rank \textup{(S1)}--\textup{(S3)} problem: on the rank-two Boolean algebra $B_2=E_{(1,1)}\cong C_1^2$, all such operations are classified and there are exactly $34$. Thus the finite-chain collapse at \textup{(S3)} is a rank-one boundary phenomenon, whereas on finite MV-effect algebras the sharp threshold for nonexistence occurs exactly at \textup{(S4)}.
\end{abstract}

\section{Introduction}
Effect algebras, introduced by Foulis and Bennett \cite{FoulisBennett1994}, provide an algebraic framework for unsharp quantum events. Gudder and Greechie \cite{GudderGreechie2002} defined a \emph{sequential product} on an effect algebra in order to abstract sequential measurement; a standard motivating example is the L\"uders product $A\circ B=A^{1/2}BA^{1/2}$ on Hilbert-space effects. Since then, most of the literature has continued to work with the full sequential-effect-algebra axiom package, or with stronger settings such as operator, convex, or normal sequential effect algebras; see for instance \cite{GudderGreechie2005,vandeWetering2018,WesterbaanEtAl2020,JencovaPulmannova2023}. In the present paper we keep the same five Gudder--Greechie axioms \textup{(S1)}--\textup{(S5)}, but we study what happens when only the initial segment \textup{(S1)}--\textup{(Sk)} is imposed.

Known finite nonexistence results already settle the full-axiom picture in the rank-one direction. In particular, chain-finite sequential effect algebras are Boolean \cite{Tkadlec2008}, so a paper devoted only to finite chains would amount to a clean but essentially known reproof. The novelty of the present work is therefore \emph{not} a new full-sequential-product nonexistence theorem for finite chains or finite MV-effect algebras. Rather, the goal is to determine the \emph{axiom threshold} at which finite MV-effect algebras first collapse, and to identify how the chain phenomenon changes in higher rank.

The first main contribution is structural and negative. We show that the universal operation
\[
\sigma_E(a,b)=
\begin{cases}
0,& a=0,\\
b,& a\neq 0,
\end{cases}
\]
satisfies \textup{(S1)}--\textup{(S3)} on every effect algebra, so \textup{(S3)} is never fatal by itself. We then prove that any \textup{(S1)}--\textup{(S4)} operation already satisfies the right-unit law $a\circ 1=a$, although that law is usually viewed as a standard consequence of the full axioms. From this we derive a local obstruction theorem: if an effect algebra contains an atom of finite isotropic index at least $2$, then no \textup{(S1)}--\textup{(S4)} operation can exist. Specializing to finite MV-effect algebras, we obtain the main threshold statement of the paper: \textup{(S1)}--\textup{(S3)} are always realizable, but \textup{(S4)} already forces Booleanity. In short, \textup{(S4)} is the first fatal axiom on finite MV-effect algebras.

The second main contribution is constructive. A finite MV-effect algebra is a simplicial interval $E_{\mathbf u}=[0,\mathbf u]\subseteq \ZZ^r$ and hence a finite product of finite chains \cite{JencovaPulmannova2008}. In this representation we classify all additive maps $E_{\mathbf u}\to E_{\mathbf v}$ by nonnegative integer matrices $M$ with $M\mathbf u\le \mathbf v$. This should be read as candidate-space infrastructure rather than as the sole novelty claim: it turns the weakened sequential-product problem into an explicit combinatorial classification of left translations. Using that framework, we classify all \textup{(S1)}+\textup{(S2)} operations on finite MV-effect algebras and then solve the first genuinely higher-rank case at the \textup{(S3)} stage. Namely, on the rank-two Boolean algebra $B_2=E_{(1,1)}\cong C_1^2$, we classify all \textup{(S1)}--\textup{(S3)} operations and show that there are exactly $34$. This exact low-rank classification makes the rank-one/higher-rank contrast concrete: the chain collapse at \textup{(S3)} is exceptional, whereas higher rank already admits many additional candidates before \textup{(S4)} is imposed.

To avoid misunderstanding, we stress once more what is and is not claimed here. We do not claim novelty for the standard simplicial-interval coordinatization of finite MV-effect algebras, and we do not claim a new full-SEA Booleanity theorem. The novelty lies in the weakened-axiom regime: a sharp cutoff at \textup{(S4)}, a local obstruction theorem that applies to arbitrary effect algebras, and an exact higher-rank \textup{(S1)}--\textup{(S3)} classification on $B_2$.

\medskip
\noindent\textbf{Main contributions.}
\begin{enumerate}[leftmargin=2.6em]
    \item A universal \textup{(S1)}--\textup{(S3)} model on every effect algebra and a derivation of the right-unit law $a\circ 1=a$ from \textup{(S1)}--\textup{(S4)} alone.
    \item A finite-isotropic-atom obstruction theorem, yielding the threshold theorem that a finite MV-effect algebra admits an \textup{(S1)}--\textup{(S4)} operation if and only if it is Boolean.
    \item A matrix classification of additive maps on finite MV-effect algebras, hence a complete classification of \textup{(S1)}+\textup{(S2)} operations.
    \item An exact classification of \textup{(S1)}--\textup{(S3)} operations on $B_2=E_{(1,1)}$, with the sharp count $34$.
\end{enumerate}

The paper is organized as follows. Section~\ref{sec:preliminaries} fixes notation and recalls the simplicial representation of finite MV-effect algebras. Section~\ref{sec:general-structural} proves the universal \textup{(S1)}--\textup{(S3)} model and the right-unit theorem. Section~\ref{sec:obstruction} contains the finite-isotropic-atom obstruction and the finite-MV threshold theorem. Section~\ref{sec:classification} gives the matrix classification of additive maps and of \textup{(S1)}+\textup{(S2)} operations, together with explicit higher-rank flexibility results. Section~\ref{sec:rank-two} classifies \textup{(S1)}--\textup{(S3)} on $B_2$. Section~\ref{sec:chains} returns to finite chains and shows that the additional collapse at \textup{(S3)} is a rank-one boundary phenomenon.

\section{Preliminaries}\label{sec:preliminaries}
An \emph{effect algebra} is a quadruple $(E,0,1,\op)$ in which $\op$ is a partially defined binary operation satisfying the standard axioms of commutativity, associativity, orthosupplementation, and the zero--one law \cite{FoulisBennett1994}. We write $a\perpdef b$ when $a\op b$ is defined, and $a'$ for the unique orthosupplement determined by $a\op a'=1$. The induced order is
\[
a\le b \quad\Longleftrightarrow\quad \exists c\in E\text{ such that }a\op c=b.
\]
We use without further comment the standard cancellation and positivity properties of effect algebras; see \cite[Definition~1.1 and the discussion after it]{Tkadlec2008}. In particular, if $x\op y=0$, then $x=y=0$.

For $n\in\NN$ and $a\in E$, when the repeated orthogonal sum exists we write
\[
na=\underbrace{a\op\cdots\op a}_{n\text{ times}}.
\]
The \emph{isotropic index} of $a$ is
\[
\ord(a):=\sup\{n\in\NN: na\text{ exists}\}\in\NN\cup\{\infty\}.
\]
An \emph{atom} is a minimal nonzero element.

We fix the Gudder--Greechie axioms for a total binary operation $\circ$ on an effect algebra $E$:
\begin{enumerate}[label=\textup{(S\arabic*)},leftmargin=3.2em]
    \item the map $b\mapsto a\circ b$ is additive, i.e.
    \[
    b\perpdef c \Longrightarrow a\circ (b\op c)=(a\circ b)\op(a\circ c);
    \]
    \item $1\circ a=a$;
    \item $a\circ b=0 \Longrightarrow a\circ b=b\circ a$;
    \item if $a\circ b=b\circ a$, then $a\circ b'=b'\circ a$ and
    \[
    a\circ (b\circ c)=(a\circ b)\circ c;
    \]
    \item if $c\circ a=a\circ c$ and $c\circ b=b\circ c$, then $c\circ(a\circ b)=(a\circ b)\circ c$, and also $c\circ(a\op b)=(a\op b)\circ c$ whenever $a\perpdef b$.
\end{enumerate}
If $\circ$ satisfies \textup{(S1)}--\textup{(Sk)} we call it an \emph{$\textup{(S1)}$--$\textup{(Sk)}$ operation}. As usual, we abbreviate $a\circ b=b\circ a$ by $a\mid b$.

We shall use the standard coordinatization of finite MV-effect algebras. Let
\[
\mathbf u=(u_1,\dots,u_r)\in\NN_{>0}^r,
\qquad
E_{\mathbf u}:=[0,\mathbf u]:=\{x\in\ZZ_{\ge 0}^r: 0\le x_i\le u_i\text{ for all }i\}.
\]
We regard $E_{\mathbf u}$ as an effect algebra with partial addition
\[
x\op y=x+y \qquad\text{whenever }x+y\le \mathbf u,
\]
and orthosupplement $x'=\mathbf u-x$. Its top element is $\mathbf u$. By standard structure theory, every finite MV-effect algebra is isomorphic to some $E_{\mathbf u}$; equivalently, every finite MV-effect algebra is a finite product of finite chains \cite[pp.~211--212]{JencovaPulmannova2008}.

We write $e_1,\dots,e_r$ for the standard basis vectors of $\ZZ^r$.

\begin{lemma}\label{lem:atoms-in-Eu}
In $E_{\mathbf u}$, the atoms are exactly $e_1,\dots,e_r$, and $\ord(e_i)=u_i$ for every $i$.
\end{lemma}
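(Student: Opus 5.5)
The plan is to pin down the order on $E_{\mathbf u}$ first and then read off both claims directly. The order should be exactly the componentwise order on $\ZZ^r$ restricted to $[0,\mathbf u]$. If $x\le y$ in the effect-algebra sense, there is $c\in E_{\mathbf u}$ with $x+c=y$, so $y-x=c\ge 0$ componentwise. Conversely, if $x\le y$ componentwise, set $c:=y-x$. Then $c\in\ZZ_{\ge 0}^r$, and $c\le y\le\mathbf u$, so $c\in E_{\mathbf u}$. Moreover $x+c=y\le\mathbf u$, so $x\op c$ is defined and equals $y$.

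With the order identified, the atoms follow quickly. A nonzero $x\in E_{\mathbf u}$ has some coordinate $x_i\ge 1$. Because $u_i\ge 1$, the vector $e_i$ lies in $E_{\mathbf u}$ and satisfies $0<e_i\le x$. Minimality of $x$ then forces $x=e_i$. For the converse, each $e_i$ is itself an atom: if $0\ne y\le e_i$, then $y_j=0$ for all $j\ne i$ and $0<y_i\le 1$, which gives $y=e_i$.

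For the isotropic index, an induction on $n$ shows that $ne_i$ exists exactly when all partial sums $ke_i$, $k\le n$, lie in $[0,\mathbf u]$. Since these partial sums increase with $k$, this is equivalent to $ne_i\le\mathbf u$, that is, $n\le u_i$. The supremum is therefore $u_i$.

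I expect no real obstacle here. The only point that needs care is the first step, showing that the abstract effect-algebra order coincides with the componentwise order. This is where we need the witness $c=y-x$ to lie in $E_{\mathbf u}$, which is immediate from $c\le y\le\mathbf u$.
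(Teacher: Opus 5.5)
Your proposal is correct and follows essentially the same route as the paper's proof: identify the atoms by comparing coordinates with $e_i$, then note that $ne_i$ exists exactly when $n\le u_i$. The only differences are that you explicitly verify that the effect-algebra order on $E_{\mathbf u}$ coincides with the componentwise order, which the paper uses tacitly, and that you spell out the induction on partial sums for the existence of $ne_i$.
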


\begin{proof}
Each $e_i$ is nonzero. If $0\le x\le e_i$, then every coordinate of $x$ except possibly the $i$th is zero, and the $i$th coordinate is either $0$ or $1$; hence $x\in\{0,e_i\}$. So each $e_i$ is an atom.

Conversely, let $x=(x_1,\dots,x_r)\in E_{\mathbf u}$ be nonzero. Choose an index $i$ with $x_i>0$. Then $e_i\le x$. If $x$ is an atom, this forces $x=e_i$. Thus the atoms are exactly the $e_i$.

Finally, $ne_i$ exists in $E_{\mathbf u}$ exactly when $n\le u_i$, because $ne_i=(0,\dots,0,n,0,\dots,0)$. Hence $\ord(e_i)=u_i$.
\end{proof}

\section{Universal \textup{(S1)}--\textup{(S3)} model and right unit}\label{sec:general-structural}
We begin with a universal model for \textup{(S1)}--\textup{(S3)}.

\begin{proposition}[A universal \textup{(S1)}--\textup{(S3)} operation]\label{prop:sigma-general}
For every effect algebra $E$, the binary operation
\begin{equation}\label{eq:sigma-general}
\sigma_E(a,b)=
\begin{cases}
0,& a=0,\\
b,& a\neq 0,
\end{cases}
\end{equation}
satisfies \textup{(S1)}, \textup{(S2)}, and \textup{(S3)}.
\end{proposition}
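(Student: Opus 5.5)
The plan is to check the three axioms directly from the definition \eqref{eq:sigma-general}, splitting each time on whether the left argument $a$ is $0$. The only effect-algebra facts needed are that $0\op 0=0$ and the positivity property recalled in Section~\ref{sec:preliminaries}: $x\op y=0$ implies $x=y=0$.

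For \textup{(S1)}, I fix $a$ and take $b\perpdef c$. If $a=0$, both sides vanish, since $\sigma_E(0,b\op c)=0=0\op 0=\sigma_E(0,b)\op\sigma_E(0,c)$. If $a\neq 0$, the left translation $b\mapsto\sigma_E(a,b)$ is the identity, and then both sides equal $b\op c$. For \textup{(S2)}, I note that $1\neq 0$ in any effect algebra: if $1=0$, then $0\op 0=0$ would give $0'=0=1$, which collapses everything. The case where $E$ is the one-point algebra is degenerate; there every operation is the constant $0$ and all axioms hold trivially. With $1\neq 0$, the definition gives $\sigma_E(1,a)=a$.

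The only step with any content is \textup{(S3)}. Suppose $\sigma_E(a,b)=0$. If $a=0$, then $\sigma_E(a,b)=0$ and $\sigma_E(b,0)$ is $0$ whether or not $b=0$, so the two sides agree. If $a\neq 0$, then $0=\sigma_E(a,b)=b$, so $b=0$ and $\sigma_E(b,a)=\sigma_E(0,a)=0=\sigma_E(a,b)$. Either way $a\mid b$. The main "obstacle" is therefore only bookkeeping: one has to remember that $\sigma_E(b,0)=0$ for every $b$. This is not a separate check, because it is exactly additivity \textup{(S1)} applied to $0=0\op 0$, together with cancellation.
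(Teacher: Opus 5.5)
Your proof is correct and follows the paper's argument step for step: left translations are either the zero map or the identity, giving \textup{(S1)}; $1\neq 0$ gives \textup{(S2)}; and for \textup{(S3)} you split into $a=0$ and $a\neq 0$, where the latter forces $b=0$. Your separate handling of the one-point algebra, where $1=0$, is a small extra piece of care that the paper omits, since it simply asserts $1\neq 0$.
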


\begin{proof}
Fix $a\in E$. If $a=0$, then $b\mapsto \sigma_E(a,b)$ is the zero map, hence additive. If $a\neq 0$, then $b\mapsto \sigma_E(a,b)$ is the identity map, hence additive. Therefore \textup{(S1)} holds.

Since $1\neq 0$, we have $\sigma_E(1,b)=b$ for every $b\in E$, so \textup{(S2)} holds.

Finally, suppose $\sigma_E(a,b)=0$. If $a=0$, then $\sigma_E(b,a)=\sigma_E(b,0)=0$. If $a\neq 0$, then the definition of $\sigma_E$ forces $b=0$, and again $\sigma_E(b,a)=\sigma_E(0,a)=0$. Thus $\sigma_E(a,b)=\sigma_E(b,a)$ whenever $\sigma_E(a,b)=0$, which is exactly \textup{(S3)}.
\end{proof}

\begin{remark}\label{rem:s3-never-fatal}
Proposition~\ref{prop:sigma-general} shows that \textup{(S3)} alone never yields a nonexistence theorem. Any first-failure result must therefore use at least \textup{(S4)}.
\end{remark}

The next theorem is the basic structural step behind the obstruction theorem.

\begin{theorem}[Right unit from \textup{(S1)}--\textup{(S4)}]\label{thm:right-unit}
Let $E$ be an effect algebra and let $\circ$ be an \textup{(S1)}--\textup{(S4)} operation on $E$. Then
\[
a\circ 1=a
\qquad (a\in E).
\]
In other words, the right-unit law already follows from \textup{(S1)}--\textup{(S4)}.
\end{theorem}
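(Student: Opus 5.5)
The plan is to derive commutation of every $a$ with $1$ from commutation with $0$: (S3) gives $a\mid 0$, (S4) transfers this to $0'=1$, and (S2) then evaluates $1\circ a$. The argument is short and uses each of \textup{(S1)}--\textup{(S4)} exactly once.

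\emph{Step 1: $a\circ 0=0$ for every $a$.} Since $0\perpdef 0$ and $0\op 0=0$, \textup{(S1)} gives
\[
a\circ 0=a\circ(0\op 0)=(a\circ 0)\op(a\circ 0).
\]
Write $x=a\circ 0$. Then $x\op 0=x=x\op x$, and cancellation in effect algebras yields $x=0$.

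\emph{Step 2: $a\mid 0$.} By Step 1, $a\circ 0=0$. Hence \textup{(S3)} applies to the pair $(a,0)$ and gives $a\circ 0=0\circ a$, that is, $a\mid 0$.

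\emph{Step 3: pass to $1$ and conclude.} Apply the first clause of \textup{(S4)} with $b=0$. Since $a\mid 0$, we get $a\circ 0'=0'\circ a$. Because $0'=1$ in any effect algebra, this reads $a\circ 1=1\circ a$. Finally, \textup{(S2)} gives $1\circ a=a$, so $a\circ 1=a$.

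The only step needing any care is Step 1, which rests on the cancellation law recalled in Section~\ref{sec:preliminaries}. The key idea is to use $0$ as the intermediary whose complement is $1$.
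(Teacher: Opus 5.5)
Your proof is correct and follows the paper's skeleton: establish $a\mid 0$, transfer it to $0'=1$ with the first clause of \textup{(S4)}, and finish with \textup{(S2)}. The only difference is how you get $0\circ a=0$: you apply \textup{(S3)} directly to the pair $(a,0)$ using Step~1, whereas the paper derives $0\circ 1=0$ from $1\circ 0=0$ via \textup{(S3)} and then uses additivity of the left translation by $0$ on $1=b\op b'$. Your shortcut is valid and slightly cleaner.
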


\begin{proof}
Fix $a\in E$. Since $0\op 0=0$, axiom \textup{(S1)} gives
\[
a\circ 0=a\circ(0\op 0)=(a\circ 0)\op(a\circ 0).
\]
By positivity, this implies $a\circ 0=0$.

Applying \textup{(S2)} with $a=0$ gives $1\circ 0=0$. Hence \textup{(S3)} yields
\[
0\circ 1=1\circ 0=0.
\]
Now let $b\in E$ be arbitrary. Since $1=b\op b'$, axiom \textup{(S1)} applied to the left translation by $0$ gives
\[
0=0\circ 1=(0\circ b)\op(0\circ b').
\]
Again by positivity, $0\circ b=0$. Thus $0\circ a=0=a\circ 0$, so $a\mid 0$.

Axiom \textup{(S4)} with $b=0$ now gives $a\mid 0'$. Because $0'=1$, we obtain $a\mid 1$. Finally,
\[
a\circ 1=1\circ a=a
\]
by commutativity with $1$ and axiom \textup{(S2)}.
\end{proof}

\begin{remark}
For full sequential effect algebras, both $a\circ 0=0$ and $a\circ 1=a$ are standard consequences of the Gudder--Greechie axioms; compare \cite[Proposition~3.2]{Tkadlec2008} and \cite[Proposition~17(1)]{WesterbaanEtAl2020}. Theorem~\ref{thm:right-unit} shows that \textup{(S5)} is not needed for the right-unit law.
\end{remark}

\section{Finite isotropic atom obstruction and the first fatal axiom}\label{sec:obstruction}
We now turn the right-unit law into a general obstruction.

\begin{theorem}[Finite isotropic atom obstruction]\label{thm:finite-isotropic-atom}
Let $E$ be an effect algebra. Assume that $E$ contains an atom $p$ with finite isotropic index
\[
2\le \ord(p)<\infty.
\]
Then $E$ admits no \textup{(S1)}--\textup{(S4)} operation.
\end{theorem}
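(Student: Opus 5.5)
The plan is to force $p\circ p=0$ using atomicity and the right-unit law. Then pass to the top multiple $x:=np$, where $n=\ord(p)$. The key point is that $p\not\le x'$, while the axioms will force $p\le x'$.

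Throughout, I would use one elementary observation. Each left translation $L_a\colon b\mapsto a\circ b$ is additive by \textup{(S1)}, hence monotone: if $b\le c$, write $c=b\op d$ to get $a\circ c=(a\circ b)\op(a\circ d)\ge a\circ b$. I would also reuse two facts from the proof of Theorem~\ref{thm:right-unit}: $0\circ c=0$ for every $c$, and $a\circ 1=a$ for every $a$.

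\emph{Step 1: $p\circ p=0$.} Since $np$ exists, additivity gives that $n(p\circ p)=p\circ(np)$ exists. By monotonicity and the right-unit law it is $\le p\circ 1=p$. If $p\circ p\neq0$, then $0<p\circ p\le n(p\circ p)\le p$, so $p\circ p=p$ because $p$ is an atom, and then $np\le p$. Writing $np=p\op(n-1)p$, we have $p\op(n-1)p\le p$. Since $n\ge2$, the element $(n-1)p$ is nonzero, so this forces $p\op(n-1)p=p$ with a nonzero summand. Cancellation and positivity make that impossible, so $p\circ p=0$.

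\emph{Step 2: the element $x=np$.} By Step 1 and additivity, $p\circ x=n(p\circ p)=0$. Axiom \textup{(S3)} then gives $x\circ p=p\circ x=0$, so $p\mid x$. Axiom \textup{(S4)} with $a=p$ and $b=x$ yields $p\mid x'$, that is, $p\circ x'=x'\circ p$. On the other hand, the right-unit law and additivity give
\[
p=p\circ 1=p\circ(x\op x')=(p\circ x)\op(p\circ x')=p\circ x'.
\]
Hence $x'\circ p=p$.

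\emph{Step 3: contradiction with finiteness of $\ord(p)$.} By monotonicity of $L_{x'}$ and the right-unit law, $p=x'\circ p\le x'\circ 1=x'$. But $p\le x'$ means $p\perpdef x$, i.e.\ $(n+1)p=x\op p$ exists. This contradicts $\ord(p)=n<\infty$.

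I expect Step 1 to be the main obstacle. It is the only place where atomicity and $n\ge2$ are used, and it requires care in the cancellation argument excluding $p\circ p=p$. The remaining steps are direct applications of \textup{(S3)}, \textup{(S4)} and Theorem~\ref{thm:right-unit}.
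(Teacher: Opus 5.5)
Your proof is correct. Step~1 matches the paper's argument: the paper writes $p=p\circ(np\op q)=n(p\circ p)\op(p\circ q)$ with $q=(np)'$, which is your monotonicity step unpacked.

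The second half takes a slightly different route. The paper feeds the trivial relation $p\mid p$ into \textup{(S4)} and obtains $p'\circ p=p$. It then has to expand the row of $p'$ along $1=np\op q$ to get $p'=n(p'\circ p)\op(p'\circ q)=np\op(p'\circ q)$, i.e.\ $np\le p'$. You instead use $p\circ(np)=n(p\circ p)=0$ together with \textup{(S3)} to get $p\mid np$. Then \textup{(S4)} gives $(np)'\circ p=p$ directly, and one monotonicity step yields $p\le (np)'$.

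Both $np\le p'$ and $p\le (np)'$ just say $p\perpdef np$, so the two endings reach the same contradiction. Yours trades one extra, cheap appeal to \textup{(S3)} for not having to redo a multiplicity computation in a second row. The paper's version applies \textup{(S4)} only to the trivial commutation $p\mid p$, so once Theorem~\ref{thm:right-unit} is in hand its second half uses nothing beyond \textup{(S1)} and \textup{(S4)}.
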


\begin{proof}
Assume toward a contradiction that $\circ$ is an \textup{(S1)}--\textup{(S4)} operation on $E$. Put
\[
n:=\ord(p)\ge 2,
\qquad
q:=(np)'.
\]
Then $np\op q=1$.

By Theorem~\ref{thm:right-unit},
\[
p=p\circ 1=p\circ(np\op q)=p\circ(np)\op(p\circ q).
\]
Repeated use of \textup{(S1)} gives $p\circ(np)=n(p\circ p)$, so
\begin{equation}\label{eq:p-row-main}
p=n(p\circ p)\op(p\circ q).
\end{equation}
Hence $p\circ p\le p$. Since $p$ is an atom, either $p\circ p=0$ or $p\circ p=p$.

Suppose $p\circ p=p$. Then \eqref{eq:p-row-main} implies $np=n(p\circ p)\le p$. Because $n\ge 2$, the element $2p$ exists and satisfies $0<2p\le np\le p$. Since $p$ is an atom, this forces $2p=p$. But then
\[
p=p\op p,
\]
contradicting cancellation. Therefore
\begin{equation}\label{eq:p-square-zero}
p\circ p=0.
\end{equation}

From the trivial equality $p\circ p=p\circ p$ we have $p\mid p$, so axiom \textup{(S4)} gives $p\mid p'$. Using \eqref{eq:p-square-zero} and Theorem~\ref{thm:right-unit}, we obtain
\[
p=p\circ 1=p\circ(p\op p')=(p\circ p)\op(p\circ p')=0\op(p\circ p')=p\circ p'.
\]
Since $p\mid p'$, this yields
\begin{equation}\label{eq:pprime-p}
p'\circ p=p.
\end{equation}

Now apply Theorem~\ref{thm:right-unit} to $p'$ and use $1=np\op q$:
\[
p'=p'\circ 1=p'\circ(np\op q)=p'\circ(np)\op(p'\circ q)=n(p'\circ p)\op(p'\circ q).
\]
By \eqref{eq:pprime-p}, this becomes
\[
p'=np\op(p'\circ q).
\]
Hence $np\le p'$, so $(n+1)p=np\op p$ exists. This contradicts the maximality of $n=\ord(p)$. Therefore no \textup{(S1)}--\textup{(S4)} operation exists on $E$.
\end{proof}

\begin{remark}
The proof uses neither lattice structure nor the Riesz decomposition property. The hypothesis is purely local: one finite-isotropic atom already prevents the existence of any \textup{(S1)}--\textup{(S4)} operation.
\end{remark}

The chain case is contained in Theorem~\ref{thm:finite-isotropic-atom}, but it is convenient to record an intermediate one-atom consequence.

\begin{proposition}\label{prop:one-atom-chain}
Let $E$ be a finite effect algebra with a unique atom $p$. Then every element of $E$ is of the form $kp$ for a unique $k\in\{0,1,\dots,\ord(p)\}$, and hence $E$ is the finite chain
\[
\{0,p,2p,\dots,\ord(p)p=1\}.
\]
In particular, if $\ord(p)\ge 2$, then $E$ admits no \textup{(S1)}--\textup{(S4)} operation.
\end{proposition}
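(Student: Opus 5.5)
The plan is to show first that every nonzero element of $E$ dominates $p$, then peel off copies of $p$ by cancellation, and finally invoke Theorem~\ref{thm:finite-isotropic-atom}.

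\textbf{Step 1: every nonzero element lies above $p$.} Since $E$ is finite, every nonzero $x\in E$ dominates some atom. Indeed, the set $\{y\in E: 0<y\le x\}$ is nonempty and finite, so it has a minimal element. That element is an atom, because anything strictly between $0$ and it would also lie below $x$. Uniqueness of the atom then gives $p\le x$ for every nonzero $x$.

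\textbf{Step 2: peeling off copies of $p$.} Fix $x\in E$. Define elements $x_0=x$ and, while $x_k\neq 0$, write $x_k=p\op x_{k+1}$ using Step~1. We verify by induction that $(k+1)p$ exists and $(k+1)p\op x_{k+1}=x$. Associativity of $\op$ gives this: $x=kp\op x_k=kp\op(p\op x_{k+1})=(k+1)p\op x_{k+1}$. The inclusion $(k+1)p\le x$ forces $k+1\le \ord(p)$.

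Next, $\ord(p)$ is finite. The elements $p,2p,3p,\dots$ are pairwise distinct: if $ip=jp$ with $i<j$, cancellation gives $(j-i)p=0$, and positivity then forces $p=0$. Finiteness of $E$ therefore bounds how many of them can exist. Consequently the peeling terminates, with some $x_k=0$, so $x=kp$ with $0\le k\le\ord(p)$.

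Uniqueness of $k$ follows from the same distinctness argument, together with $0p=0\ne kp$ for $k\ge1$. Hence $E=\{0,p,\dots,\ord(p)p\}$.

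\textbf{Step 3: identifying the top element.} We must check that $\ord(p)p=1$. Put $m=\ord(p)$. By Step~2, $(mp)'=jp$ for some $j$, so $mp\op jp=(m+j)p=1$ exists. Maximality of $m$ forces $j=0$, hence $mp=1$. The listed elements satisfy $ip\le jp$ for $i\le j$, so $E$ is a chain.

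\textbf{Step 4: the final claim.} If $\ord(p)\ge2$, then $p$ is an atom with $2\le\ord(p)<\infty$, and Theorem~\ref{thm:finite-isotropic-atom} applies directly to rule out any \textup{(S1)}--\textup{(S4)} operation.

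The only step requiring care is the termination and uniqueness argument in Step~2, which relies on cancellation and positivity. The rest is routine.
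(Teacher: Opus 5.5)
Your proof is correct and follows essentially the same route as the paper: find an atom below each nonzero element, peel off copies of $p$ by cancellation until you reach $0$, identify the top element, and apply Theorem~\ref{thm:finite-isotropic-atom}. You spell out details the paper compresses into ``using finiteness'' and ``necessarily $n=\ord(p)$'', namely that the multiples $ip$ are pairwise distinct (so $\ord(p)<\infty$ and the peeling terminates) and that maximality of $\ord(p)$ forces $\ord(p)p=1$.
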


\begin{proof}
Let $x\in E$ be nonzero. Because $E$ is finite, the interval $[0,x]$ has a minimal nonzero element, hence an atom. By uniqueness of atoms, that atom is $p$, so $p\le x$. Thus every nonzero $x$ can be written as $x=p\op x_1$. If $x_1\neq 0$, the same argument gives $p\le x_1$, so $x_1=p\op x_2$. Iterating and using finiteness, we obtain $x=kp$ for some $k\ge 1$. Uniqueness of $k$ follows from cancellation.

Taking $x=1$ shows that $1=np$ for some $n$, necessarily $n=\ord(p)$. Hence $E$ is exactly the displayed chain. The final claim follows from Theorem~\ref{thm:finite-isotropic-atom} when $n\ge 2$.
\end{proof}

We now specialize to finite MV-effect algebras.

\begin{theorem}[Threshold theorem for finite MV-effect algebras]\label{thm:finite-mv-threshold}
Let $E$ be a finite MV-effect algebra. The following are equivalent:
\begin{enumerate}[label=\textup{(\roman*)},leftmargin=2.7em]
    \item $E$ is Boolean;
    \item $E$ admits an \textup{(S1)}--\textup{(S4)} operation;
    \item $E$ admits a sequential product.
\end{enumerate}
In particular, for finite MV-effect algebras the first fatal axiom is \textup{(S4)}.
\end{theorem}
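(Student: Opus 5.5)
We prove the cycle (i)$\Rightarrow$(iii)$\Rightarrow$(ii)$\Rightarrow$(i). Throughout we identify $E$ with some $E_{\mathbf u}$, as permitted by the standard coordinatization recalled in Section~\ref{sec:preliminaries}.

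The implication (iii)$\Rightarrow$(ii) is immediate. A sequential product is by definition an operation satisfying \textup{(S1)}--\textup{(S5)}, so it is in particular an \textup{(S1)}--\textup{(S4)} operation.

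For (ii)$\Rightarrow$(i), suppose $E\cong E_{\mathbf u}$ admits an \textup{(S1)}--\textup{(S4)} operation. By Lemma~\ref{lem:atoms-in-Eu}, the atoms of $E_{\mathbf u}$ are exactly the $e_i$, and $\ord(e_i)=u_i$ is finite. If some $u_i\ge 2$, then $e_i$ is an atom with $2\le\ord(e_i)<\infty$, and Theorem~\ref{thm:finite-isotropic-atom} rules out any \textup{(S1)}--\textup{(S4)} operation, a contradiction. Hence $\mathbf u=(1,\dots,1)$, and $E_{\mathbf u}=\{0,1\}^r$. It remains to note that this is Boolean. The partial sum $x\op y$ is defined exactly when $x$ and $y$ have disjoint supports, and the orthosupplement is $x'=\mathbf 1-x$. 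So $E_{\mathbf u}$ is the Boolean algebra of subsets of $\{1,\dots,r\}$, with $\op$ being disjoint union. One small point to check is the degenerate case $r=0$, the one-element algebra; it is trivially Boolean, and the coordinatization presumably excludes it anyway.

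For (i)$\Rightarrow$(iii), I would exhibit the usual meet product $a\circ b:=a\wedge b$ on a finite Boolean algebra and verify \textup{(S1)}--\textup{(S5)}. \textup{(S1)} is distributivity: if $b\perpdef c$, i.e.\ $b\wedge c=0$, then $a\wedge(b\vee c)=(a\wedge b)\vee(a\wedge c)$, and the two joinands are disjoint. \textup{(S2)} holds because $1\wedge a=a$. Since $\wedge$ is commutative, every pair commutes. Therefore \textup{(S3)}, \textup{(S5)}, and the commutation part of \textup{(S4)} are automatic, and the remaining part of \textup{(S4)} is associativity of $\wedge$. Alternatively, one can cite the standard fact that Boolean algebras are sequential effect algebras under $\wedge$ \cite{GudderGreechie2002}. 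The main (minor) obstacle is simply making sure that, in the Boolean effect algebra, $\op$ coincides with the join of disjoint elements. In coordinates this is immediate, since the sum of two $0/1$ vectors with disjoint supports is their coordinatewise maximum.

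Finally, the ``in particular'' clause follows by combining the equivalence with Proposition~\ref{prop:sigma-general}. By that proposition, every finite MV-effect algebra, Boolean or not, admits an \textup{(S1)}--\textup{(S3)} operation. By the equivalence just proved, a non-Boolean finite MV-effect algebra (for instance any chain $C_n$ with $n\ge 2$) admits no \textup{(S1)}--\textup{(S4)} operation. So \textup{(S4)} is precisely the first axiom whose addition produces nonexistence.
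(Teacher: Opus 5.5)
Your proposal is correct and follows the paper's proof essentially step for step. It uses the same cycle (i)$\Rightarrow$(iii)$\Rightarrow$(ii)$\Rightarrow$(i), obtains (ii)$\Rightarrow$(i) from Lemma~\ref{lem:atoms-in-Eu} and Theorem~\ref{thm:finite-isotropic-atom}, and gets the ``first fatal axiom'' clause from Proposition~\ref{prop:sigma-general}; the only difference is that you check directly that the meet is a sequential product and that $E_{(1,\dots,1)}$ is the power-set Boolean algebra, where the paper simply cites Gudder--Greechie.
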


\begin{proof}
The implication \textup{(iii)}$\Rightarrow$\textup{(ii)} is immediate. If $E$ is Boolean, then the meet operation $a\circ b:=a\wedge b$ is a sequential product \cite{GudderGreechie2002}, so \textup{(i)}$\Rightarrow$\textup{(iii)}.

It remains to prove \textup{(ii)}$\Rightarrow$\textup{(i)}. Write $E\cong E_{\mathbf u}$ for some $\mathbf u=(u_1,\dots,u_r)\in\NN_{>0}^r$. By Lemma~\ref{lem:atoms-in-Eu}, the atoms are $e_1,\dots,e_r$ and $\ord(e_i)=u_i$. If some $u_i\ge 2$, then Theorem~\ref{thm:finite-isotropic-atom} forbids the existence of an \textup{(S1)}--\textup{(S4)} operation. Therefore every $u_i=1$, so $E\cong E_{(1,\dots,1)}$, which is a finite Boolean algebra.

Finally, Proposition~\ref{prop:sigma-general} shows that every effect algebra, hence every finite MV-effect algebra, admits an \textup{(S1)}--\textup{(S3)} operation. Therefore \textup{(S4)} is indeed the first fatal axiom in this class.
\end{proof}

\begin{remark}
Theorem~\ref{thm:finite-mv-threshold} is the paper's main finite-MV cutoff statement. It is strictly finer than the known full-sequential-product Booleanity results: it identifies the exact stage in the weakened axiom hierarchy at which non-Boolean finite MV-effect algebras fail.
\end{remark}

\section{Candidate-space classification on finite MV-effect algebras}\label{sec:classification}
We now describe the constructive side of the picture.

\begin{definition}
Let $\mathbf u\in\NN_{>0}^r$ and $\mathbf v\in\NN_{>0}^s$. A matrix $M\in M_{s\times r}(\NN)$ is called \emph{$(\mathbf u,\mathbf v)$-subunital} if $M\mathbf u\le \mathbf v$ coordinatewise. In the special case $\mathbf u=\mathbf v$, we simply say that $M$ is \emph{$\mathbf u$-subunital}.
\end{definition}

\begin{theorem}[Matrix classification of additive maps]\label{thm:additive-maps-matrix}
Let $\mathbf u\in\NN_{>0}^r$ and $\mathbf v\in\NN_{>0}^s$. For a map $L\colon E_{\mathbf u}\to E_{\mathbf v}$, the following are equivalent.
\begin{enumerate}[label=\textup{(\roman*)},leftmargin=2.7em]
    \item $L$ is additive, i.e.
    \[
    x\perpdef y \Longrightarrow L(x\op y)=L(x)\op L(y);
    \]
    \item there exists a unique $(\mathbf u,\mathbf v)$-subunital matrix $M\in M_{s\times r}(\NN)$ such that
    \[
    L(x)=Mx
    \qquad (x\in E_{\mathbf u}).
    \]
\end{enumerate}
Equivalently, the additive maps $E_{\mathbf u}\to E_{\mathbf v}$ are exactly the restrictions to $E_{\mathbf u}$ of positive group homomorphisms $\ZZ^r\to\ZZ^s$ whose values on $\mathbf u$ lie below $\mathbf v$.
\end{theorem}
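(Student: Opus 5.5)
The plan is to prove (ii)$\Rightarrow$(i) by a direct check, and (i)$\Rightarrow$(ii) by decomposing each $x\in E_{\mathbf u}$ into atoms and defining $M$ columnwise from the images of the atoms.

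\textbf{(ii)$\Rightarrow$(i).} Suppose $L(x)=Mx$ with $M\ge 0$ entrywise and $M\mathbf u\le\mathbf v$. For $x\in E_{\mathbf u}$ we have $0\le Mx\le M\mathbf u\le \mathbf v$ by monotonicity of nonnegative matrices, so $L$ indeed maps into $E_{\mathbf v}$. If $x\perpdef y$, then $x+y\le\mathbf u$. Hence $Mx+My=M(x+y)\le\mathbf v$, so $L(x)\perpdef L(y)$ and $L(x\op y)=L(x)\op L(y)$.

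\textbf{(i)$\Rightarrow$(ii).} Assume $L$ is additive.

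\begin{itemize}
\item First, $L(0)=L(0\op 0)=2L(0)$ in $\ZZ^s$, which forces $L(0)=0$.
\item Define the $i$th column of $M$ by $m_i:=L(e_i)\in E_{\mathbf v}\subseteq\ZZ_{\ge0}^s$. This makes $M$ a nonnegative integer matrix.
\item Every $x\in E_{\mathbf u}$ can be written as an orthogonal sum of atoms, $x=\bigoplus_i x_ie_i$. All partial sums stay below $\mathbf u$, so each intermediate $\op$ is defined.
\item Apply additivity step by step along these partial sums (an induction on $\sum_i x_i$). This gives $L(x)=\sum_i x_i m_i=Mx$. The point is that additivity applies at each step precisely because each partial sum is orthogonal to the next $e_i$.
\item Taking $x=\mathbf u$ gives $M\mathbf u=L(\mathbf u)\in E_{\mathbf v}$, so $M\mathbf u\le\mathbf v$ and $M$ is subunital.
\end{itemize}

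\textbf{Uniqueness.} Any $M$ with $L(x)=Mx$ must satisfy $Me_i=L(e_i)$, and each $e_i\in E_{\mathbf u}$ because $u_i\ge1$. So the columns of $M$ are determined by $L$.

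\textbf{Final reformulation.} A positive homomorphism $\ZZ^r\to\ZZ^s$ is given by an integer matrix mapping $\ZZ_{\ge0}^r$ into $\ZZ_{\ge0}^s$. Applying it to $e_i$ shows this happens exactly when all entries of the matrix are nonnegative. So these homomorphisms are precisely the maps $x\mapsto Mx$ with $M\in M_{s\times r}(\NN)$. Adding the condition $M\mathbf u\le \mathbf v$ gives exactly the maps in (ii).

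There is no real obstacle here. The only point needing care is the bookkeeping in the inductive decomposition, namely that each intermediate orthogonal sum exists. This holds because the partial sums are coordinatewise below $x\le\mathbf u$.
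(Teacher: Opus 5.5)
Your proposal is correct and follows essentially the same route as the paper. Both arguments take the columns of $M$ to be $L(e_j)$, use the decomposition $x=x_1e_1\op\cdots\op x_re_r$ with repeated additivity to get $L(x)=Mx$, read off $M\mathbf u=L(\mathbf u)\le\mathbf v$, and prove the converse via $0\le Mx\le M\mathbf u\le\mathbf v$. Your explicit checks that $L(0)=0$ and that positive homomorphisms are exactly the nonnegative integer matrices fill in small points the paper leaves implicit.
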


\begin{proof}
Assume first that $L$ is additive. For $j=1,\dots,r$, put
\[
c_j:=L(e_j)\in E_{\mathbf v}\subseteq \ZZ_{\ge 0}^s,
\]
and let $M$ be the $s\times r$ matrix with columns $c_1,\dots,c_r$. We claim that $L(x)=Mx$ for every $x\in E_{\mathbf u}$.

Write $x=(x_1,\dots,x_r)\in E_{\mathbf u}$. Since
\[
x=x_1e_1\op\cdots\op x_re_r
\]
in $E_{\mathbf u}$, repeated additivity gives
\[
L(x)=L(x_1e_1)\op\cdots\op L(x_re_r).
\]
Applying additivity again to each $x_je_j=e_j\op\cdots\op e_j$ ($x_j$ times), we obtain
\[
L(x_j e_j)=x_j L(e_j)=x_j c_j.
\]
Therefore
\[
L(x)=x_1c_1+\cdots+x_rc_r=Mx.
\]
In particular,
\[
M\mathbf u=L(\mathbf u)\in E_{\mathbf v},
\]
so $M\mathbf u\le \mathbf v$. Thus $M$ is $(\mathbf u,\mathbf v)$-subunital. Uniqueness is immediate, since the $j$th column of $M$ must equal $L(e_j)$.

Conversely, let $M\in M_{s\times r}(\NN)$ satisfy $M\mathbf u\le \mathbf v$, and define $L(x)=Mx$. For $x\in E_{\mathbf u}$ we have $0\le x\le \mathbf u$, hence
\[
0\le Mx\le M\mathbf u\le \mathbf v,
\]
so $L(x)\in E_{\mathbf v}$. If $x\perpdef y$ in $E_{\mathbf u}$, then $x+y\le \mathbf u$, and therefore
\[
L(x\op y)=M(x+y)=Mx+My=L(x)\op L(y),
\]
because $Mx+My=M(x+y)\le M\mathbf u\le \mathbf v$. Hence $L$ is additive.
\end{proof}

\begin{remark}\label{rem:rank-one-special}
When $r=s=1$ and $\mathbf u=(n)$, Theorem~\ref{thm:additive-maps-matrix} says that additive self-maps of the chain $C_n\cong E_{(n)}$ correspond exactly to $1\times 1$ matrices $[m]$ with $mn\le n$, hence to $m\in\{0,1\}$. This is precisely the zero/identity dichotomy from the finite-chain note.
\end{remark}

\begin{corollary}\label{cor:S1S2-classification-mv}
Let $E\cong E_{\mathbf u}$ be a finite MV-effect algebra. A binary operation $\circ$ on $E$ satisfies \textup{(S1)} and \textup{(S2)} if and only if for each $a\in E$ there exists a unique $\mathbf u$-subunital matrix $M_a\in M_r(\NN)$ such that
\[
a\circ x=M_a x
\qquad (x\in E_{\mathbf u}),
\]
and the top row is normalized by
\[
M_{\mathbf u}=I_r.
\]
In particular, the rows $a\mapsto M_a$ are independent except for the single condition $M_{\mathbf u}=I_r$.
\end{corollary}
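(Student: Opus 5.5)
The plan is to derive the corollary directly from Theorem~\ref{thm:additive-maps-matrix} applied with $\mathbf v=\mathbf u$, treating (S1) and (S2) separately.

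For the forward direction, assume $\circ$ satisfies (S1) and (S2). For each $a\in E_{\mathbf u}$, (S1) says exactly that the left translation $L_a\colon E_{\mathbf u}\to E_{\mathbf u}$, $x\mapsto a\circ x$, is additive. Theorem~\ref{thm:additive-maps-matrix} then gives a unique $\mathbf u$-subunital matrix $M_a\in M_r(\NN)$ with $L_a(x)=M_ax$ for all $x\in E_{\mathbf u}$. Uniqueness is inherited from the theorem, since the $j$th column of $M_a$ is forced to be $a\circ e_j$. Axiom (S2) states $\mathbf u\circ x=x$ for all $x$, so $L_{\mathbf u}=\id$. The identity map is represented by $I_r$, which is clearly $\mathbf u$-subunital, so the uniqueness clause gives $M_{\mathbf u}=I_r$.

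For the converse, suppose we are given, for each $a$, a $\mathbf u$-subunital matrix $M_a$ with $M_{\mathbf u}=I_r$, and we define $a\circ x:=M_ax$. The reverse implication of Theorem~\ref{thm:additive-maps-matrix} shows that each $x\mapsto M_ax$ maps $E_{\mathbf u}$ into itself and is additive, which is (S1). The normalization $M_{\mathbf u}=I_r$ gives $\mathbf u\circ x=x$, which is (S2), because the top element of $E_{\mathbf u}$ is $\mathbf u$.

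For the final sentence, neither (S1) nor (S2) couples different left translations: (S1) constrains each row $a$ separately, and (S2) constrains only the row $a=\mathbf u$. Hence the assignment $a\mapsto M_a$ can be chosen freely from the finite set of $\mathbf u$-subunital matrices for each $a\neq\mathbf u$, with $M_{\mathbf u}=I_r$ fixed. I expect no real obstacle here. The only point needing care is to state explicitly that the uniqueness in Theorem~\ref{thm:additive-maps-matrix} forces $M_{\mathbf u}=I_r$, rather than merely allowing $I_r$ as one admissible choice.
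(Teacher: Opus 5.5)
Your proposal is correct and follows essentially the same route as the paper: apply Theorem~\ref{thm:additive-maps-matrix} with $\mathbf v=\mathbf u$ to each left translation to get (S1), and read (S2) as $M_{\mathbf u}=I_r$. Your explicit remark that the uniqueness clause forces $M_{\mathbf u}=I_r$ is a small clarification that the paper leaves implicit.
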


\begin{proof}
Axiom \textup{(S1)} says exactly that each left translation
\[
L_a\colon E_{\mathbf u}\to E_{\mathbf u},\qquad L_a(x)=a\circ x,
\]
is additive. By Theorem~\ref{thm:additive-maps-matrix}, this is equivalent to the existence of a unique $\mathbf u$-subunital matrix $M_a$ with $L_a(x)=M_a x$. Since the top element of $E_{\mathbf u}$ is $\mathbf u$, axiom \textup{(S2)} is exactly the requirement
\[
\mathbf u\circ x=x,
\]
which means $M_{\mathbf u}=I_r$.

Conversely, any family $(M_a)_{a\in E_{\mathbf u}}$ with $M_a\mathbf u\le \mathbf u$ for all $a$ and $M_{\mathbf u}=I_r$ defines a binary operation by $a\circ x=M_a x$, and Theorem~\ref{thm:additive-maps-matrix} yields \textup{(S1)}, while $M_{\mathbf u}=I_r$ gives \textup{(S2)}.
\end{proof}

\begin{corollary}\label{cor:counting-general}
Let
\[
\mathcal M(\mathbf u):=\{M\in M_r(\NN): M\mathbf u\le \mathbf u\}
\]
be the set of $\mathbf u$-subunital matrices. Then
\[
\#\mathcal M(\mathbf u)=\prod_{i=1}^r \#\bigl\{\alpha\in\NN^r: \alpha\cdot \mathbf u\le u_i\bigr\},
\]
and the number of binary operations on $E_{\mathbf u}$ satisfying \textup{(S1)} and \textup{(S2)} is
\[
\#\mathcal M(\mathbf u)^{\,|E_{\mathbf u}|-1}.
\]
Since $|E_{\mathbf u}|=\prod_{i=1}^r (u_i+1)$, this exponent is explicit.
\end{corollary}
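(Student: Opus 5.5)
The plan is a direct counting argument built on Corollary~\ref{cor:S1S2-classification-mv}. First I will compute $\#\mathcal M(\mathbf u)$. The condition $M\mathbf u\le\mathbf u$ is read coordinatewise. Its $i$th coordinate is $\sum_j M_{ij}u_j\le u_i$, and this involves only the $i$th row $\alpha=(M_{i1},\dots,M_{ir})\in\NN^r$. So the constraint says exactly that $\alpha\cdot\mathbf u\le u_i$, and the rows are constrained independently of one another. This gives a bijection between $\mathcal M(\mathbf u)$ and $\prod_{i=1}^r\{\alpha\in\NN^r:\alpha\cdot\mathbf u\le u_i\}$, which yields the product formula. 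Each factor is finite because every $u_j\ge 1$ forces $\alpha_j\le u_i$.

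Next I will count the (S1)+(S2) operations. By Corollary~\ref{cor:S1S2-classification-mv}, such operations correspond bijectively to families $(M_a)_{a\in E_{\mathbf u}}$ with each $M_a\in\mathcal M(\mathbf u)$ and $M_{\mathbf u}=I_r$. Two points are needed for this to be a genuine bijection. The uniqueness part of Theorem~\ref{thm:additive-maps-matrix} ensures that distinct families give distinct operations; here I use that $M_a$ is determined by its values on the $e_j\in E_{\mathbf u}$. In the other direction, the converse part of the corollary shows that every such family arises from an operation. I also need $I_r\in\mathcal M(\mathbf u)$, which holds trivially, so the normalization is consistent. The family is therefore free on the $|E_{\mathbf u}|-1$ elements $a\neq\mathbf u$ and fixed at $a=\mathbf u$, giving $\#\mathcal M(\mathbf u)^{|E_{\mathbf u}|-1}$ operations.

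Finally, $|E_{\mathbf u}|=\prod_i(u_i+1)$ holds because each coordinate $x_i$ ranges independently over $\{0,\dots,u_i\}$. There is no real obstacle in this argument. The only point needing care is the row-separability of the subunital condition and the injectivity of the correspondence, and both are supplied by the earlier results.
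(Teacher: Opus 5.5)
Your proposal is correct and follows the paper's proof. Row-separability of $M\mathbf u\le\mathbf u$ gives the product formula, and Corollary~\ref{cor:S1S2-classification-mv} then allows a free choice in $\mathcal M(\mathbf u)$ for each $a\neq\mathbf u$, with $M_{\mathbf u}=I_r$ forced; your remarks on injectivity via uniqueness of $M_a$, on $I_r\in\mathcal M(\mathbf u)$, and on finiteness of each factor just make explicit what the paper leaves implicit.
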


\begin{proof}
A matrix $M\in M_r(\NN)$ is $\mathbf u$-subunital exactly when each row $\alpha$ of $M$ satisfies $\alpha\cdot \mathbf u\le u_i$ in the corresponding row position. Hence the rows are independent, and the displayed product formula follows.

By Corollary~\ref{cor:S1S2-classification-mv}, every element $a\neq \mathbf u$ may be assigned an arbitrary matrix from $\mathcal M(\mathbf u)$, while the row at $\mathbf u$ is forced to be $I_r$. Thus the number of \textup{(S1)}+\textup{(S2)} operations is $\#\mathcal M(\mathbf u)^{|E_{\mathbf u}|-1}$.
\end{proof}

The homogeneous case has a particularly simple form.

\begin{corollary}\label{cor:homogeneous-power}
Let $\mathbf u=(n,\dots,n)\in\NN_{>0}^r$, so that $E_{\mathbf u}\cong C_n^r$. Then the additive self-maps of $E_{\mathbf u}$ are exactly the coordinate-picker maps
\[
L(x_1,\dots,x_r)=\bigl(x_{\varphi(1)},\dots,x_{\varphi(r)}\bigr),
\]
where $\varphi\colon\{1,\dots,r\}\to\{0,1,\dots,r\}$ and $x_0:=0$. Consequently,
\[
\#\mathcal M(\mathbf u)=(r+1)^r,
\]
and the number of \textup{(S1)}+\textup{(S2)} operations on $C_n^r$ is
\[
\bigl((r+1)^r\bigr)^{(n+1)^r-1}.
\]
\end{corollary}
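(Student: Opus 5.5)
The plan is to specialize the row-wise description behind Corollary~\ref{cor:counting-general} to $\mathbf u=(n,\dots,n)$ and then translate matrices back into maps. By Theorem~\ref{thm:additive-maps-matrix}, the additive self-maps of $E_{\mathbf u}$ are exactly the maps $x\mapsto Mx$ with $M\in\mathcal M(\mathbf u)$, each such map arising from a unique matrix. So everything reduces to describing the rows allowed by the subunitality condition and counting them.

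For a row $\alpha=(\alpha_1,\dots,\alpha_r)\in\NN^r$, the condition $\alpha\cdot\mathbf u\le u_i$ becomes $n(\alpha_1+\cdots+\alpha_r)\le n$. Since $n>0$, this is the same as $\sum_j\alpha_j\le 1$. Because the entries are nonnegative integers, $\alpha$ is therefore either the zero row or a standard basis row $e_j^{T}$. That gives exactly $r+1$ choices per row, and the rows are independent, so the product formula in Corollary~\ref{cor:counting-general} yields $\#\mathcal M(\mathbf u)=(r+1)^r$.

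Next I encode each admissible row $i$ by $\varphi(i)\in\{0,1,\dots,r\}$: set $\varphi(i)=j$ when the row is $e_j^{T}$, and $\varphi(i)=0$ when the row is zero. With the convention $x_0:=0$, the $i$th coordinate of $Mx$ is $x_{\varphi(i)}$, so $Mx$ is the coordinate-picker map. This correspondence $M\leftrightarrow\varphi$ is a bijection. Combined with the uniqueness in Theorem~\ref{thm:additive-maps-matrix}, it shows that distinct $\varphi$ give distinct maps, because $L(e_j)$ recovers the $j$th column. Finally, $|E_{\mathbf u}|=(n+1)^r$, and Corollary~\ref{cor:counting-general} then gives $\bigl((r+1)^r\bigr)^{(n+1)^r-1}$ operations satisfying \textup{(S1)}+\textup{(S2)}.

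No step is a real obstacle. The only point needing care is the observation that $n>0$ lets us cancel $n$, so nonnegative integrality forces each row to have at most one nonzero entry, and that entry equals $1$.
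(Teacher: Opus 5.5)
Your proposal is correct and follows essentially the same route as the paper. Both arguments reduce the subunitality condition row by row to $\sum_j\alpha_j\le 1$, identify the admissible rows as zero or standard basis rows (hence the coordinate-picker maps with $(r+1)^r$ choices), and then apply Corollary~\ref{cor:counting-general} with $|E_{\mathbf u}|=(n+1)^r$.
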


\begin{proof}
If $M$ is $\mathbf u$-subunital, then each row $\alpha=(\alpha_1,\dots,\alpha_r)$ satisfies
\[
n(\alpha_1+\cdots+\alpha_r)=\alpha\cdot \mathbf u\le n,
\]
so $\alpha_1+\cdots+\alpha_r\le 1$. Hence every row is either the zero row or a standard basis vector. Conversely, any such matrix satisfies $M\mathbf u\le \mathbf u$. Therefore additive self-maps are exactly the coordinate-picker maps described above, one independent choice for each row. There are $r+1$ possibilities per row and hence $(r+1)^r$ such maps in total.

Finally, $|C_n^r|=(n+1)^r$, so Corollary~\ref{cor:counting-general} gives the displayed number of \textup{(S1)}+\textup{(S2)} operations.
\end{proof}

\begin{remark}
Already for $C_1^2$, the coordinate swap $(x_1,x_2)\mapsto(x_2,x_1)$ is additive. Thus the chain dichotomy ``every left translation is zero or identity'' is genuinely rank one.
\end{remark}

\begin{corollary}\label{cor:homogeneous-flexibility}
Let $r\ge 2$ and $\mathbf u=(n,\dots,n)\in\NN_{>0}^r$, so that $E_{\mathbf u}\cong C_n^r$. Let $P$ be a nonidentity permutation matrix. Define
\[
\tau_P(a,x)=
\begin{cases}
0,& a=0,\\
x,& a=\mathbf u,\\
Px,& a\notin\{0,\mathbf u\}.
\end{cases}
\]
Then $\tau_P$ is an \textup{(S1)}--\textup{(S3)} operation on $E_{\mathbf u}$.

In particular, for every homogeneous power $C_n^r$ with $r\ge 2$, the rank-one uniqueness phenomenon at \textup{(S3)} fails.
\end{corollary}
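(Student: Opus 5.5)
We verify the three axioms in turn for $\tau_P$, using the matrix description from Corollary~\ref{cor:S1S2-classification-mv}. The left translation $x\mapsto\tau_P(a,x)$ is given by the matrix $M_0=0$ when $a=0$, by $M_{\mathbf u}=I_r$ when $a=\mathbf u$, and by $M_a=P$ otherwise. Each of these matrices has nonnegative integer entries. Each is also $\mathbf u$-subunital. For $0$ and $I_r$ this is trivial. For $P$ it holds because $\mathbf u=(n,\dots,n)$ is fixed by every permutation matrix, so $P\mathbf u=\mathbf u$. Corollary~\ref{cor:S1S2-classification-mv} (equivalently Theorem~\ref{thm:additive-maps-matrix}) then gives \textup{(S1)}, and the normalization $M_{\mathbf u}=I_r$ gives \textup{(S2)}. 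Note that $Px\in E_{\mathbf u}$ for every $x\in E_{\mathbf u}$, since permuting coordinates preserves the box $[0,n]^r$.

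The only step needing an argument is \textup{(S3)}. Suppose $\tau_P(a,b)=0$; we show $\tau_P(b,a)=0$, splitting into three cases according to $a$.
\begin{enumerate}[label=\textup{(\alph*)},leftmargin=2.7em]
    \item If $a=0$, then $\tau_P(b,0)=M_b\,0=0$.
    \item If $a=\mathbf u$, then $b=\tau_P(\mathbf u,b)=0$, so $\tau_P(b,a)=\tau_P(0,\mathbf u)=0$.
    \item If $a\notin\{0,\mathbf u\}$, then $Pb=0$. Since $P$ is invertible, $b=0$, and again $\tau_P(b,a)=\tau_P(0,a)=0$.
\end{enumerate}
In every case $\tau_P(a,b)=0=\tau_P(b,a)$, which is \textup{(S3)}. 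The key point is the injectivity of $P$, which mirrors the argument for $\sigma_E$ in Proposition~\ref{prop:sigma-general}.

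For the ``in particular'' clause, we compare $\tau_P$ with $\sigma_{E_{\mathbf u}}$. Since $r\ge2$, there is an element $a\notin\{0,\mathbf u\}$, for instance $e_1$. Because $P\neq I_r$, some basis vector satisfies $Pe_j\neq e_j$. Hence $\tau_P(a,e_j)=Pe_j\neq e_j=\sigma_{E_{\mathbf u}}(a,e_j)$, so $\tau_P\neq\sigma_{E_{\mathbf u}}$. Together with Proposition~\ref{prop:sigma-general}, this shows that $C_n^r$ carries at least two distinct \textup{(S1)}--\textup{(S3)} operations. This contrasts with the rank-one situation, where Remark~\ref{rem:rank-one-special} forces every left translation to be $0$ or the identity.

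I expect no real obstacle. The one point needing care is the homogeneity hypothesis, which is exactly what guarantees $P\mathbf u\le\mathbf u$; for inhomogeneous $\mathbf u$ a permutation need not be subunital.
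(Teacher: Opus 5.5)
Your proof is correct and follows the paper's argument. You get \textup{(S1)} and \textup{(S2)} from the matrix classification, checking $P\mathbf u=\mathbf u$ directly where the paper cites Corollary~\ref{cor:homogeneous-power}, and you get \textup{(S3)} by the same three-case split on $a$ using the invertibility of $P$. Your comparison $\tau_P(e_1,e_j)=Pe_j\neq e_j=\sigma_{E_{\mathbf u}}(e_1,e_j)$ also spells out the \emph{in particular} clause, which the paper leaves implicit.
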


\begin{proof}
By Corollary~\ref{cor:homogeneous-power}, the zero map, the identity map, and the permutation map $x\mapsto Px$ are all additive self-maps of $E_{\mathbf u}$. Hence each left translation of $\tau_P$ is additive, so \textup{(S1)} holds. The row at $\mathbf u$ is the identity, so \textup{(S2)} holds.

To verify \textup{(S3)}, suppose that $\tau_P(a,x)=0$. If $a=0$, then $\tau_P(x,a)=\tau_P(x,0)=0$ because every additive map sends $0$ to $0$. If $a=\mathbf u$, then $\tau_P(a,x)=x=0$, so again $\tau_P(x,a)=\tau_P(0,\mathbf u)=0$. Finally, if $a\notin\{0,\mathbf u\}$, then $Px=0$, and since $P$ is invertible we get $x=0$; therefore $\tau_P(x,a)=\tau_P(0,a)=0$. Thus $\tau_P(a,x)=0$ always implies $\tau_P(x,a)=0$, which is exactly \textup{(S3)}.
\end{proof}

\section{Exact low-rank \textup{(S1)}--\textup{(S3)} classification on $B_2$}\label{sec:rank-two}
We now bridge the gap between the general \textup{(S1)}+\textup{(S2)} matrix classification and the rank-one chain collapse by solving the smallest higher-rank case at the \textup{(S3)} stage. This is the first place where the paper gives an exact higher-rank \textup{(S1)}--\textup{(S3)} theorem rather than a general obstruction or candidate-space description.

Let
\[
B_2:=E_{(1,1)}=\{0,p,q,1\},
\qquad
p=(1,0),\ q=(0,1),\ 1=p\op q.
\]
Because $B_2$ is Boolean, an additive map $L\colon B_2\to B_2$ is determined by the orthogonal pair $\bigl(L(p),L(q)\bigr)$, and conversely every orthogonal pair determines an additive map. Equivalently, by Corollary~\ref{cor:homogeneous-power}, the additive self-maps of $B_2$ are exactly the nine coordinate-picker maps.

\begin{theorem}[Exact \textup{(S1)}--\textup{(S3)} classification on $B_2$]\label{thm:B2-S123-classification}
A binary operation $\circ$ on $B_2$ satisfies \textup{(S1)}, \textup{(S2)}, and \textup{(S3)} if and only if there exist nonzero additive maps
\[
A,B\colon B_2\to B_2
\]
such that
\begin{equation}\label{eq:cross-zero-condition}
A(q)=0 \quad\Longleftrightarrow\quad B(p)=0
\end{equation}
and
\begin{equation}\label{eq:B2-row-form}
0\circ x=0,\qquad p\circ x=A(x),\qquad q\circ x=B(x),\qquad 1\circ x=x
\qquad (x\in B_2).
\end{equation}
Equivalently, if we write
\[
u:=A(p),\ v:=A(q),\ s:=B(p),\ t:=B(q),
\]
then $u\perpdef v$, $s\perpdef t$, $(u,v)\neq(0,0)$, $(s,t)\neq(0,0)$, and
\[
v=0 \quad\Longleftrightarrow\quad s=0.
\]
There are exactly $34$ such operations.
\end{theorem}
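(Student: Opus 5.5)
The plan is to reduce everything to Corollary~\ref{cor:S1S2-classification-mv} and then read off \textup{(S3)} pair by pair on the four-element set $\{0,p,q,1\}$. By that corollary, \textup{(S1)}+\textup{(S2)} is equivalent to every left translation $L_a(x)=a\circ x$ being additive with $L_1=\id$. So an \textup{(S1)}--\textup{(S3)} operation is determined by three additive maps $L_0$, $A:=L_p$ and $B:=L_q$. It remains to find exactly which triples satisfy \textup{(S3)}.

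\emph{Necessity.} First, the opening of the proof of Theorem~\ref{thm:right-unit} uses only \textup{(S1)}--\textup{(S3)}: it gives $a\circ 0=0$ for all $a$, then $0\circ 1=1\circ 0=0$, and hence $0\circ x=0$ for all $x$. So $L_0=0$, which is the first equation of \eqref{eq:B2-row-form}. Next, apply \textup{(S3)} to the pairs $(p,1)$ and $(q,1)$. If $A(1)=p\circ 1=0$, then \textup{(S3)} gives $1\circ p=p=0$, a contradiction. Hence $A(1)=u\op v\neq 0$, which means $A$ is nonzero, and symmetrically $B$ is nonzero. Finally, apply \textup{(S3)} to the pair $(p,q)$. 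If $v=p\circ q=0$, then $s=q\circ p=p\circ q=0$; the reverse implication follows in the same way. This gives \eqref{eq:cross-zero-condition}.

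\emph{Sufficiency.} Here I will run through all ordered pairs $(a,x)$ with $a\circ x=0$ and check that $x\circ a=0$.
\begin{itemize}
\item Pairs with $a=0$ or $x=0$ are fine, since $L_0=0$ and additive maps kill $0$.
\item If $a=1$ and $1\circ x=x=0$, this falls under the previous case.
\item The pairs $(p,1)$ and $(q,1)$ never have product $0$, because $A$ and $B$ are nonzero and $A(1)=u\op v$, $B(1)=s\op t$.
\item Diagonal pairs $(a,a)$ are trivial.
\item The only remaining pairs are $(p,q)$ and $(q,p)$, and these are handled exactly by \eqref{eq:cross-zero-condition}.
\end{itemize}
The reformulation in terms of $u,v,s,t$ is immediate, since additive maps on $B_2$ correspond bijectively to orthogonal pairs $(L(p),L(q))$.

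\emph{Counting.} This is the only place where some care is needed. The orthogonal pairs $(u,v)$ in $B_2$ correspond to the $9$ coordinate-picker maps of Corollary~\ref{cor:homogeneous-power}, so $8$ of them are nonzero. Among these nonzero pairs, those with $v=0$ are $(p,0)$, $(q,0)$ and $(1,0)$, giving $3$; the other $5$ have $v\neq0$. Symmetrically, $3$ nonzero pairs $(s,t)$ have $s=0$ and $5$ have $s\neq0$. The biconditional $v=0\Leftrightarrow s=0$ pairs the $v=0$ cases with the $s=0$ cases and the $v\neq0$ cases with the $s\neq0$ cases. Therefore the total is $3\cdot 3+5\cdot 5=34$.
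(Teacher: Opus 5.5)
Your proposal is correct and takes essentially the same route as the paper. You kill the row at $0$ via \textup{(S2)}, \textup{(S3)} and additivity, get $A,B\neq 0$ from \textup{(S3)} at $(p,1)$ and $(q,1)$, and get the cross-zero condition from \textup{(S3)} at $(p,q)$; sufficiency uses the same case analysis and the count is $3\cdot 3+5\cdot 5=34$. The only cosmetic difference is that you cite the opening of the proof of Theorem~\ref{thm:right-unit}, which indeed uses only \textup{(S1)}--\textup{(S3)}, to get $0\circ x=0$, whereas the paper repeats that argument inline.
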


\begin{proof}
Assume first that $\circ$ satisfies \textup{(S1)}--\textup{(S3)}. Since $1\circ 0=0$, axiom \textup{(S3)} gives $0\circ 1=0$. Now
\[
1=x\op x'
\]
for every $x\in B_2$, so additivity of the left translation by $0$ yields
\[
0=0\circ 1=(0\circ x)\op(0\circ x').
\]
By positivity, $0\circ x=0$ for all $x\in B_2$. Thus the row at $0$ is the zero map.

Let
\[
A(x):=p\circ x,
\qquad
B(x):=q\circ x.
\]
By \textup{(S1)}, both $A$ and $B$ are additive. By \textup{(S2)}, the row at $1$ is the identity map.

We claim that $A$ and $B$ are nonzero. If $A=0$, then in particular $p\circ 1=0$, so \textup{(S3)} gives
\[
1\circ p=p\circ 1=0,
\]
contradicting \textup{(S2)}. Hence $A\neq 0$. The same argument shows $B\neq 0$.

Finally,
\[
A(q)=p\circ q=0
\]
if and only if
\[
q\circ p=0,
\]
and the latter equality is exactly $B(p)=0$. Therefore \eqref{eq:cross-zero-condition} holds, and \eqref{eq:B2-row-form} follows from the definitions.

Conversely, assume that $A$ and $B$ are nonzero additive maps satisfying \eqref{eq:cross-zero-condition}, and define $\circ$ by \eqref{eq:B2-row-form}. Then \textup{(S1)} holds because each left translation is additive, and \textup{(S2)} holds because the row at $1$ is the identity.

It remains to prove \textup{(S3)}. Suppose that $a\circ b=0$.

If $a=0$, then $b\circ a=b\circ 0=0$ because every additive map sends $0$ to $0$. If $a=1$, then $b=1\circ b=a\circ b=0$, so again $b\circ a=0$. Thus only $a\in\{p,q\}$ needs attention.

If $b=0$, then $b\circ a=0$ as above. If $b=1$, then $a\circ 1$ is either $A(1)$ or $B(1)$, and this is nonzero because $A$ and $B$ are nonzero additive maps on $B_2$. Hence the case $b=1$ cannot occur under the assumption $a\circ b=0$.

If $(a,b)=(p,p)$ or $(q,q)$, then $b\circ a=a\circ a=0$ trivially. If $(a,b)=(p,q)$, then $A(q)=0$, so \eqref{eq:cross-zero-condition} gives $B(p)=0$, hence
\[
b\circ a=q\circ p=B(p)=0.
\]
The case $(a,b)=(q,p)$ is symmetric. Therefore $a\circ b=0$ always implies $b\circ a=0$, so \textup{(S3)} holds.

For the counting statement, note that a nonzero additive map $A$ is determined by an orthogonal pair $(u,v)=(A(p),A(q))\neq(0,0)$. There are exactly three such pairs with $v=0$, namely
\[
(p,0),\ (q,0),\ (1,0),
\]
and exactly five such pairs with $v\neq 0$, namely
\[
(0,p),\ (0,q),\ (0,1),\ (p,q),\ (q,p).
\]
Likewise, a nonzero additive map $B$ is determined by an orthogonal pair $(s,t)=(B(p),B(q))\neq(0,0)$; there are three possibilities with $s=0$ and five with $s\neq 0$. Condition \eqref{eq:cross-zero-condition} matches the two zero-patterns, so the total number of operations is
\[
3\cdot 3+5\cdot 5=34.
\]
\end{proof}

\begin{remark}\label{rem:B2-contrast}
Theorem~\ref{thm:B2-S123-classification} is the first complete higher-rank \textup{(S1)}--\textup{(S3)} classification in the paper. It shows that the rank-one collapse to a unique operation is not merely absent in higher rank; it already fails on the smallest higher-rank finite MV-effect algebra.
\end{remark}

\section{Finite chains as the rank-one boundary case}\label{sec:chains}
We now return to the original chain computation and place it inside the general theory. The point of this section is not to recover a new full-sequential-product nonexistence theorem, but to identify finite chains as the rank-one boundary case of the broader finite-MV picture developed above.

For $n\ge 1$, write
\[
\Cn=\{0,e,2e,\dots,ne=1\}
\]
with partial addition $(ie)\op(je)=(i+j)e$ whenever $i+j\le n$. Identifying $ie\leftrightarrow i$, this is exactly $E_{(n)}$.

By Remark~\ref{rem:rank-one-special}, the additive self-maps of $\Cn$ are exactly the zero map and the identity map. Hence Corollary~\ref{cor:S1S2-classification-mv} immediately recovers the first stage of the chain computation: there are exactly $2^n$ operations on $\Cn$ satisfying \textup{(S1)} and \textup{(S2)}.

In contrast with Theorem~\ref{thm:B2-S123-classification}, the next proposition shows that the further collapse at \textup{(S3)} is special to rank one.

\begin{proposition}[Rank-one \textup{(S1)}--\textup{(S3)} collapse]\label{prop:chain-s123-unique}
For every $n\ge 1$, there is exactly one binary operation on $\Cn$ satisfying \textup{(S1)}, \textup{(S2)}, and \textup{(S3)}. It is
\begin{equation}\label{eq:sigma-chain}
\sigma_n(a,b)=
\begin{cases}
0,& a=0,\\
b,& a\neq 0.
\end{cases}
\end{equation}
\end{proposition}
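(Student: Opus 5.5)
Existence is already available: Proposition~\ref{prop:sigma-general} shows that $\sigma_{\Cn}$ satisfies \textup{(S1)}--\textup{(S3)}, and on $\Cn$ this operation is exactly $\sigma_n$ as defined in \eqref{eq:sigma-chain}. The work is therefore uniqueness. I would carry it out in three steps, following the pattern of the proof of Theorem~\ref{thm:B2-S123-classification}.

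\emph{Step 1: every row is zero or the identity.} By Remark~\ref{rem:rank-one-special} together with Corollary~\ref{cor:S1S2-classification-mv}, any operation satisfying \textup{(S1)} and \textup{(S2)} has this form. For each $a\in\Cn$, the left translation $x\mapsto a\circ x$ is either the zero map or the identity map. The row at $1=ne$ is forced to be the identity.

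\emph{Step 2: the row at $0$ is zero.} Exactly as in the proof of Theorem~\ref{thm:right-unit}, I would argue in two moves:
\begin{itemize}
\item \textup{(S2)} gives $1\circ 0=0$, so \textup{(S3)} gives $0\circ 1=0$;
\item hence the row at $0$ is not the identity, since the identity would send $1$ to $1\neq 0$.
\end{itemize}
Alternatively, one can use positivity on $0=(0\circ x)\op(0\circ x')$.

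\emph{Step 3: every row at $a\neq 0$ is the identity.} This is the only step that really uses \textup{(S3)} beyond the zero row. Suppose some $a\neq 0$ has the zero row. Then $a\circ 1=0$, so \textup{(S3)} forces $1\circ a=0$. But \textup{(S2)} gives $1\circ a=a\neq 0$, a contradiction. Therefore every nonzero $a$ has the identity row, and the operation coincides with $\sigma_n$.

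I do not expect a genuine obstacle. The point needing care is Step~3, namely using the test element $b=1$, which is where rank one matters: the only nonzero additive maps are the identity, so no analogue of the cross-zero freedom \eqref{eq:cross-zero-condition} can arise.
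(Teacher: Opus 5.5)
Your proposal is correct and follows essentially the same route as the paper: existence from Proposition~\ref{prop:sigma-general}, the zero/identity dichotomy for left translations on $\Cn$, then \textup{(S3)} with $1\circ 0=0$ to force the zero row at $0$, and \textup{(S3)} with \textup{(S2)} at the test element $b=1$ to rule out a zero row at any $a\neq 0$. The only difference is that the paper handles the nonzero rows before the row at $0$, which is immaterial.
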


\begin{proof}
Proposition~\ref{prop:sigma-general} shows that $\sigma_n$ satisfies \textup{(S1)}--\textup{(S3)}.

Conversely, let $\circ$ be an \textup{(S1)}--\textup{(S3)} operation on $\Cn$. Since the additive self-maps of $\Cn$ are exactly $0$ and $\id$, every left translation $L_a(x)=a\circ x$ is either the zero map or the identity map.

Fix $a\neq 0$. If $L_a=0$, then $a\circ 1=0$. By \textup{(S3)} we obtain $a\mid 1$, hence
\[
a=1\circ a=a\circ 1=0
\]
by \textup{(S2)}, a contradiction. Therefore $L_a=\id$ for every nonzero $a$.

Finally, since $1\circ 0=0$, axiom \textup{(S3)} gives $0\circ 1=0$. The left translation by $0$ cannot therefore be the identity map, so it is the zero map. Hence $\circ=\sigma_n$.
\end{proof}

Combining the rank-one uniqueness with the general obstruction yields the full finite-chain picture.

\begin{corollary}\label{cor:chain-collapse}
Let $n\ge 1$. On the finite chain $\Cn$:
\begin{enumerate}[label=\textup{(\roman*)},leftmargin=2.7em]
    \item there are exactly $2^n$ operations satisfying \textup{(S1)} and \textup{(S2)};
    \item there is exactly one operation satisfying \textup{(S1)}--\textup{(S3)}, namely $\sigma_n$;
    \item if $n\ge 2$, there is no operation satisfying \textup{(S1)}--\textup{(S4)};
    \item $\Cn$ admits a sequential product if and only if $n=1$.
\end{enumerate}
\end{corollary}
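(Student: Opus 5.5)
The corollary collects four statements, and each follows from results already proved, so the plan is a short assembly argument taking the items in order.

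For (i), use Remark~\ref{rem:rank-one-special}. It identifies $\Cn$ with $E_{(n)}$, so $\mathcal M((n))=\{[0],[1]\}$. Corollary~\ref{cor:counting-general} then gives $2^{|E_{(n)}|-1}=2^{(n+1)-1}=2^n$ operations. Concretely, every non-top element $a$ independently receives the zero map or the identity, and the top row is forced to be the identity by \textup{(S2)}.

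For (ii), simply cite Proposition~\ref{prop:chain-s123-unique}.

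For (iii), use Lemma~\ref{lem:atoms-in-Eu}. It shows that $e=(1)$ is an atom of $E_{(n)}$ with $\ord(e)=n$. When $n\ge 2$ this satisfies $2\le\ord(e)<\infty$, so Theorem~\ref{thm:finite-isotropic-atom} rules out every \textup{(S1)}--\textup{(S4)} operation. Proposition~\ref{prop:one-atom-chain} would serve equally well.

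For (iv), note that $\Cn$ is a finite MV-effect algebra, namely $E_{(n)}$, so Theorem~\ref{thm:finite-mv-threshold} applies: $\Cn$ admits a sequential product if and only if it is Boolean. By the last step of that theorem's proof, $E_{(n)}$ is Boolean exactly when $n=1$. One can also argue directly. If $n\ge 2$, item (iii) excludes even \textup{(S1)}--\textup{(S4)}. If $n=1$, then $C_1=\{0,1\}$ is the two-element Boolean algebra, and $a\circ b=a\wedge b$ is a sequential product by \cite{GudderGreechie2002}. Note also that for $n=1$ this meet coincides with $\sigma_1$, the unique operation found in (ii).

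None of these steps should be a real obstacle. The one point needing care is the exponent in (i): the count is $|E_{(n)}|-1=n$, not $n+1$, because the row at the top element is fixed by \textup{(S2)}.
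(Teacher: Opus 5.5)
Your proposal is correct and matches the paper's proof item by item: (i) from Corollary~\ref{cor:counting-general} with $\#\mathcal M((n))=2$, (ii) by citing Proposition~\ref{prop:chain-s123-unique}, (iii) from Theorem~\ref{thm:finite-isotropic-atom} applied to the atom $e$ of index $n$, and (iv) via (iii) for $n\ge 2$ together with the meet on $\{0,1\}$ for $n=1$. One phrase is loose: the threshold theorem's proof does not show that ``$E_{(n)}$ is Boolean exactly when $n=1$''; strictly, it shows only that an \textup{(S1)}--\textup{(S4)} operation forces $n=1$, which is all your route needs, and your direct argument makes the point moot anyway.
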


\begin{proof}
Part \textup{(i)} follows from Corollary~\ref{cor:counting-general} in the case $\mathbf u=(n)$, since then $\#\mathcal M((n))=2$. Part \textup{(ii)} is Proposition~\ref{prop:chain-s123-unique}.

If $n\ge 2$, then the atom $e\in \Cn$ has isotropic index $n$, so Theorem~\ref{thm:finite-isotropic-atom} gives \textup{(iii)}. Finally, \textup{(iv)} follows from part \textup{(iii)} for $n\ge 2$, while for $n=1$ the Boolean algebra $\{0,1\}$ carries the sequential product $a\circ b=a\wedge b$.
\end{proof}

\begin{remark}
Corollary~\ref{cor:chain-collapse} recovers the complete axiom-by-axiom collapse of the original finite-chain note. What changes in the present paper is the interpretation. The chain proof is no longer the main theorem; it is the rank-one model case of a broader structure theory in which \textup{(S4)} is the first fatal axiom on all finite MV-effect algebras, while Theorem~\ref{thm:B2-S123-classification} shows that the collapse at \textup{(S3)} is exceptional to chains.
\end{remark}

\end{document}